\documentclass[12pt,a4paper]{article}
\usepackage{amsmath,amssymb,amsfonts, amsthm}

\title{\bf Dedekind Sums with Arguments near Certain Transcendental Numbers}

\author{Kurt Girstmair\\
Institut f\"ur Mathematik \\
Universit\"at Innsbruck   \\
Technikerstr. 13/7        \\
A-6020 Innsbruck, Austria \\
Kurt.Girstmair@uibk.ac.at}
\thispagestyle{empty}

\date{}

\makeatletter
\let\@@maketitle=\maketitle
\def\maketitle{\def\thispagestyle##1{\relax}\@@maketitle}
\makeatother
\textwidth=1.157\textwidth
\textheight=1.15\textheight
\voffset-0.1\textheight
\hoffset-0.05\textwidth

\newtheorem{theorem}{Theorem}
\newtheorem{lemma}{Lemma}
\newtheorem{prop}{Proposition}
\newenvironment{rem}{{\em Remark.}}{}

\def\BE{\begin{equation}}
\def\EE{\end{equation}}
\def\BD{\begin{displaymath}}
\def\ED{\end{displaymath}}
\def\BEA{\begin{eqnarray*}}
\def\EEA{\end{eqnarray*}}
\def\BI{\bibitem}

\def\Z{\mathbb Z}
\def\Q{\mathbb Q}

\def\MB{\mbox}
\def\LD{\ldots}

\def\DED{Dedekind }
\def\WH{\widehat}

\begin{document}

\maketitle

\begin{abstract}

We study the asymptotic behaviour of the classical Dedekind sums $s(s_k/t_k)$ for the sequence of convergents $s_k/t_k$
$k\ge 0$, of the transcendental number
\BD
   \sum_{j=0}^\infty\frac 1{b^{2^j}},\ b\ge 3.
\ED
In particular, we show that there are infinitely many open intervals of constant length such that the
sequence $s(s_k/t_k)$ has infinitely many transcendental cluster points in each interval.

\end{abstract}

\noindent
{\em Keywords:}
\\ Asymptotic behaviour of \DED sums,\\
Continued fraction expansions of transcendental numbers

\noindent
{\em AMS Subject Classification:}
\\ {\em Primary:} 11 F 20,
\\ {\em Secondary:} 11 A 55

\newpage

\section{Introduction and result}

\DED sums have quite a number of interesting applications in
analytic number theory (modular forms), algebraic number theory (class numbers),
lattice point problems and algebraic geometry
(for instance \cite{{Ap}, {Me}, {RaGr}, {Ur}}).

Let $n$ be a positive integer and $m\in \Z$, $(m,n)=1$. The classical \DED sum $s(m/n)$ is defined by
\BD
   s(m/n)=\sum_{k=1}^n ((k/n))((mk/n))
\ED
where $((\LD))$ is the usual sawtooth function (for example, \cite[p.\ 1]{RaGr}).
In the present setting it is more
natural to work with
\BD
S(m/n)=12s(m/n)
\ED instead.

In the previous paper \cite{Gi} we used the Barkan-Hickerson-Knuth-formula to study the asymptotic behaviour
of $S(s_k/t_k)$ for the convergents $s_k/t_k$ of transcendental numbers like $e$ or $e^2$.
In this situation the limiting behaviour of $S(s_k/t_k)$ was fairly simple. It is much more complicated, however,
for the transcendental number
\BE
\label{0.2}
   x(b)=\sum_{j=0}^\infty\frac 1{b^{2^j}},\ b\ge 3.
\EE
In fact, we have no full description of what happens in this case. Its complexity is illustrated by the following
theorem, which forms the main result of this paper.

\begin{theorem} 
\label{t1}
Let $s_k/t_k$, $k\ge 0$, be the sequence of convergents of the number $x(b)$ of {\rm (\ref{0.2})}.
Then the sequence $S(s_k/t_k)$, $k\ge 0$, has infinitely many transcendental cluster points
in each of the intervals
\BD
  \left(b-10-2i+\frac 1b,b-9-2i+\frac 1{b-1}\right),\ i\ge 0.
\ED
\end{theorem}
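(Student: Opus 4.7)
The plan is to exploit two ingredients: the well-understood structure of the continued fraction expansion of the lacunary series $x(b)$, and the Barkan-Hickerson-Knuth (BHK) formula expressing $S(s_k/t_k)$ in terms of the partial quotients of $s_k/t_k$ together with bounded correction terms involving $s_k/t_k$ and a modular inverse.

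First I would derive an explicit description of the partial quotients $a_1, a_2, \ldots$ of $x(b)$. Because the partial sums $X_n = \sum_{j=0}^{n-1} b^{-2^j}$ have unusually good approximation quality (error of order $b^{-2^n}$ against a denominator of size $b^{2^{n-1}}$), each $X_n$ is itself a convergent of $x(b)$, at some index $k_n$. The block of partial quotients between $k_n$ and $k_{n+1}$ can be obtained by applying the Euclidean algorithm to the two explicit rationals $X_n$ and $X_{n+1}$; this produces a recursive, self-similar description reminiscent of the folding structure exploited by Mendes France and Shallit for similar lacunary series. Most of the partial quotients are bounded in terms of $b$ (indeed, values like $b$ and $b-1$ dominate), while isolated ``jump'' partial quotients — located precisely at the indices $k_n$ — are very large.

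Second, I would insert this description into the BHK formula, which has the shape
\[
S(s_k/t_k) \;=\; \sum_{j=1}^{k}(-1)^{j-1}a_j \;+\; \frac{s_k}{t_k} \;+\; \frac{(-1)^{k}\,\overline{s}_k}{t_k} \;-\; 3\,\epsilon_k,
\]
where $\overline{s}_k$ denotes the inverse of $s_k$ modulo $t_k$ and $\epsilon_k\in\{0,\pm 1\}$. The folding symmetry forces cascades of cancellations in the alternating sum, leaving an integer remainder built from the ``boundary'' partial quotients and from the parity of $k$. For each $i\ge 0$ I would isolate a subsequence of indices $k$ along which this integer remainder equals $b-10-2i$ (or $b-9-2i$, depending on parity), so that $S(s_k/t_k)$ is pinned inside the stated interval; the surviving fractional contribution $s_k/t_k + (-1)^{k}\overline{s}_k/t_k$ then occupies the slack between $1/b$ and $1+1/(b-1)$.

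Third, to extract infinitely many transcendental cluster points within each interval, I would let the ``depth'' of the chosen subsequence vary. For each admissible depth, the ratio $\overline{s}_k/t_k$ converges to a limit expressible as an infinite continued fraction whose tail is a subtail of the CF of $x(b)$. Because $x(b)$ and its tails are transcendental (by the Kempner--Mahler-type theorem, strengthened via Roth's theorem to rule out algebraicity of the relevant subtails), these limits are transcendental; different depths yield distinct limits, producing the required infinite family of cluster points per interval. The principal obstacle is Step~1: the recursive description must be tight enough to predict both the length and the sign parity of the CF block below $k_n$, since a miscounted parity would swap the two integer values $b-10-2i$ and $b-9-2i$ in the alternating sum and destroy the localisation in the prescribed interval. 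Careful induction on $n$, tracking even/odd lengths of the folded blocks, should resolve this.
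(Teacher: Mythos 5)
Your overall architecture matches the paper's (Shallit's continued fraction for $x(b)$, the Barkan--Hickerson--Knuth formula, cancellation in the alternating sum along special subsequences, and limits of the tail ratios), but Step~1 rests on a false premise that would derail the quantitative part of the argument. For $x(b)=\sum_j b^{-2^j}$ the exponents double, which is exactly the borderline case: the partial sum $X_n$ has denominator $q=b^{2^{n-1}}$ and the tail is of size about $b^{-2^n}=q^{-2}$, so the approximation is only of convergent quality $q^{-2}$ and \emph{no} large partial quotients are forced (indeed the partial sums are not even the convergents at the block boundaries; e.g.\ $C(1)=[0,b-1,b+2]=(b+2)/(b^2+b-1)$ whereas $1/b+1/b^2=(b+1)/b^2$). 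By Shallit's Theorem~8, \emph{all} partial quotients of $x(b)$ lie between $1$ and $b+2$; the correct structure is the folding recursion $C(j+1)=[0,a_1,\ldots,a_n,a_n-2,a_{n-1},\ldots,a_2,a_1+1]$. Your posited ``very large jump quotients at the indices $k_n$'' belong to the regime of faster-growing exponents (e.g.\ $\sum b^{-3^j}$); if they were present, the alternating sum $\sum(-1)^{j-1}a_j$ could not be pinned to the single integer value $b-7-2i$ along the chosen subsequence, and the localisation in an interval of bounded length would fail. The paper's Lemmas~1--4 and Proposition~1 do precisely the bookkeeping you sketch, but they depend on the symmetry $a_{n+k}=a_{n-k+1}$ of the folded blocks, not on isolating large quotients.

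Two further steps are asserted rather than proved. First, the cluster points have the form $b-10-2i+x+1/t(k)$, where $1/t(k)$ is the limit of $t_{\widehat n+k-1}/t_{\widehat n+k}$; knowing that $x$ and $1/t(k)$ are each transcendental does not make their \emph{sum} transcendental. The paper closes this by writing $1/t(k)$ as a fractional linear expression in $x$ with integer coefficients, so that algebraicity of $\alpha=x+1/t(k)$ would force $x$ to satisfy a quadratic equation over $\Q(\alpha)$ and hence be algebraic (Lemma~6); no Roth-type input is needed. Second, ``different depths yield distinct limits'' needs an argument: the paper's Lemma~5 shows $t(k)\ne t(k')$ because an identity $[a_{k'},\ldots,a_{k+1},t(k)]=t(k)$ would make $t(k)$ a quadratic irrational, contradicting its transcendence. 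Without these two points you have neither the transcendence nor the infinitude of the cluster points in each interval.
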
 

Note that each of the intervals of Theorem \ref{t1} has the length $1+1/(b(b-1))$, whereas the distance
between two neighbouring intervals is $1-1/(b(b-1))$.

\section{The integer part}

We start with the continued fraction expansion $[a_0,a_1,a_2,\LD]$ of an arbitrary irrational number $x$.
The numerators and denominators of its convergents
\BE
\label{2.0.1}
s_k/t_k=[a_0,a_1,\LD,a_k]
\EE
are defined by the recursion formulas
\begin{eqnarray}
\label{2.0}
   s_{-2}=0,&& s_{-1}=1,\hspace{5mm} s_k=a_ks_{k-1}+s_{k-2} \,\MB{ and } \nonumber\\
   t_{-2}=1,&& t_{-1}=0,\hspace{5mm} t_k=a_kt_{k-1}+t_{k-2},\,\MB{ for } k\ge 0.
\end{eqnarray}
Henceforth we will assume $0<x<1$, so $a_0=0$.
Then the Barkan-Hickerson-Knuth formula says that for $k\ge 0$
\BE
\label{2.2}
S(s_k/t_k)=\sum_{j=1}^k(-1)^{j-1}a_j+\begin{cases} \vspace{5mm}
                                                              (s_k+ t_{k-1})/{t_k}-3, & \MB{if } k \MB{ is odd}; \\
                                                               (s_k-t_{k-1})/{t_k},  & \MB{if } k \MB{ is even}
                                      \end{cases}
\EE
(see \cite{Ba}, \cite{Hi}, \cite{Kn}).

In the case of the number $x=x(b)$, the continued fraction expansion has been given in \cite{Sh}.
It is defined recursively. To this end
put
\BD
C(1)=C(1,b)=[0,b-1,b+2]
\ED
in the sense of (\ref{2.0.1}) and (\ref{2.0}).
If $C(j)=C(j,b)$ has been defined for $j\ge 1$ and $C(j)=[0,a_1,\LD,a_n]$ (where $n=2^j$),
then
\BD
   C(j+1)=C(j+1,b)=[0,a_1,\LD,a_n,a_n-2,a_{n-1},a_{n-2},\LD,a_2,a_1+1].
\ED
Then $x=\lim_{j\to\infty}C(j)$. In particular, $x=[0,a_1,a_2,\LD]$, where $a_k$ is the corresponding partial
denominator of each $C(j)$ with $2^j\ge k$.

In view of formula (\ref{2.2}) for $x=x(b)$, it is natural to investigate
\BD
  L(k)=L(k, b)=\sum_{j=1}^{k-1}(-1)^{j-1}a_j,\ k\ge 0,
\ED
first. For the sake of simplicity we call $L(k)$ the {\em integer part} of the \DED sum $S(s_k/t_k)$.

The following lemma comprises three easy observations.

\begin{lemma} 
\label{l1}
Let $[0,a_1,a_2,\LD]$ be the continued fraction expansion of $x=x(b)$ and $n=2^j$, $j\ge 0$.

  {\rm (a)} If $n\ge 4$, then
\BD
  a_{n+k}=a_{n-k+1} \MB{ for } 2\le k\le n-1.
\ED

  {\rm (b)} If $n\ge 8$, then
\BD
   a_k=a_{n-k+1} \MB{ for } 2\le k\le n/2-1.
\ED

  {\rm (c)} If $n\ge 8$, then
\BD
  a_k=a_{n+k} \MB{ for } 2\le k\le n/2-1.
\ED
\end{lemma}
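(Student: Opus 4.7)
The plan is to exploit the recursive definition of $C(j)$ directly; each part is essentially a relabelling, so the main task is to keep the indices straight and observe how the three statements depend on one another.

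For part (a), I would unwind the recursion at a single step. Setting $n=2^j$, the definition prescribes
\[
C(j+1) = [0, a_1, \ldots, a_n, a_n - 2, a_{n-1}, a_{n-2}, \ldots, a_2, a_1 + 1],
\]
so the partial quotients of $C(j+1)$ in positions $n+1, n+2, \ldots, 2n$ are exactly $a_n-2,\ a_{n-1},\ a_{n-2},\ \ldots,\ a_2,\ a_1+1$. Reading this off, $a_{n+k}=a_{n-k+1}$ holds for every $k$ with $2 \le k \le n-1$; the boundary indices $k=1$ and $k=n$ are exactly the positions perturbed by $-2$ and $+1$, which is why the range excludes them. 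The hypothesis $n\ge 4$ is only needed to make the range $2\le k\le n-1$ non-empty. Note also that this argument uses that the partial quotients stabilize as soon as $2^j\ge k$, which the paper has already noted.

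For part (b), I would apply part (a) at the previous stage. Let $m=n/2=2^{j-1}$; since $n\ge 8$, we have $m\ge 4$, so part (a) applied at the step $C(j-1)\to C(j)$ yields
\[
a_{m+k} = a_{m-k+1}\quad \text{for } 2\le k\le m-1.
\]
Substituting $\ell = m+k$ converts this into $a_\ell = a_{2m-\ell+1} = a_{n-\ell+1}$ for $m+2\le\ell\le 2m-1=n-1$. This is the same identity $a_k = a_{n-k+1}$ one wants in (b), but with $k$ in the upper half of the range $\{2,\ldots,n-1\}$. Since the identity $a_k = a_{n-k+1}$ is symmetric under $k\leftrightarrow n-k+1$, the upper range $\{n/2+2,\ldots,n-1\}$ and the lower range $\{2,\ldots,n/2-1\}$ give the same set of equalities, establishing (b).

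For part (c), I would simply chain the two previous identities. For $2\le k\le n/2-1$, part (a) gives $a_{n+k}=a_{n-k+1}$ and part (b) gives $a_k=a_{n-k+1}$, hence $a_k=a_{n+k}$.

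The only real obstacle is bookkeeping with the two different kinds of symmetry (reflection around $n$ vs.\ reflection across $n/2$ within the earlier block $C(j-1)$) and ensuring the index ranges match; once (a) is observed from the recursion, (b) is a relabelling of (a) one level down, and (c) is the composition of (a) and (b).
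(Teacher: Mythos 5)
Your proof is correct and follows essentially the same route as the paper: (a) is read off from the recursive definition of $C(j+1)$, (b) is obtained by applying (a) one level down (the paper phrases this as deriving $a_l=a_{2n-l+1}$ from (a) at level $n$ and then relabelling $2n$ as $n$, which is the same index shift you perform), and (c) is the composition of (a) and (b). The index ranges in your version all check out.
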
 

\begin{proof}
Obviously, assertion (c) follows from (a) and (b).
Assertion (a) is immediate from the definition of the continued fraction expansion of $x(b)$.
In order to deduce (b) from (a), we assume $n\ge 4$ and put $l=n-k+1$, $2\le k\le n-1$.
Then $a_l=a_{n-k+1}=a_{n+k}$, by (a). Since $k=n-l+1$, this gives $a_l=a_{n+(n-l+1)}=a_{2n-l+1}$.
So we have, for $n\ge 8$ and $2\le l\le n/2-1$: $a_l=a_{n-l+1}$, which is (b).
\end{proof}

\begin{lemma} 
\label{l2} Let $n=2^j$, $n\ge 4$. For $1\le k\le n-1$ we have
\BD
  L(n+k)=-2+L(n-k).
\ED
\end{lemma}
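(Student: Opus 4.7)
My approach is to compute $L(n+k)-L(n-k)$ directly from the definition. After cancelling the common initial segment, this difference becomes an alternating sum of partial quotients over a range of $2k$ consecutive indices situated symmetrically around $n$. The plan is to pair up the terms of this sum about the centre and to invoke Lemma \ref{l1}(a) to show that almost all pairs cancel.

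Concretely, I would pair the index $n+i$ with the index $n-i+1$ for each $i$ in the appropriate range. Because $n=2^j$ is even, the signs $(-1)^{j-1}$ assigned to $a_{n+i}$ and $a_{n-i+1}$ are opposite, so each pair contributes, up to a global sign, the quantity $a_{n+i}-a_{n-i+1}$. Lemma \ref{l1}(a) asserts that $a_{n+i}=a_{n-i+1}$ whenever $2\le i\le n-1$, and the hypothesis $1\le k\le n-1$ guarantees that all indices $i\ge 2$ arising in the pairing lie in this range. Hence every such pair contributes $0$.

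What survives is the single pair $i=1$, i.e.\ the indices $n$ and $n+1$, for which Lemma \ref{l1}(a) does not apply. The recursive construction of $x(b)$ prescribes that the partial quotient immediately following the block $[0,a_1,\ldots,a_n]$ inside $C(j+1)$ equals $a_n-2$; that is, $a_{n+1}=a_n-2$. The signs attached to $a_n$ and $a_{n+1}$ in the remaining sum are $-$ and $+$ respectively (again because $n$ is even), so this pair contributes $-a_n+a_{n+1}=-2$, which is the desired value.

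The only subtlety is accounting for this one exceptional position. Lemma \ref{l1}(a) describes a palindromic symmetry of the partial quotients around the index $n+\tfrac12$, but the symmetry is broken at exactly one spot by the $-2$ that is hard-wired into the recursive definition of $C(j+1)$. That single discrepancy is precisely what supplies the constant $-2$ in the lemma; everything else cancels for purely combinatorial reasons, so the main work of the proof is simply to match up indices and signs correctly.
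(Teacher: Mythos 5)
Your proof is correct and follows essentially the same route as the paper: both arguments rest on the palindromic symmetry $a_{n+i}=a_{n-i+1}$ of Lemma \ref{l1}(a) together with the single exceptional relation $a_{n+1}=a_n-2$, which is exactly what produces the constant $-2$. The paper organizes the cancellation by first settling $k=1$ and then re-indexing $\sum_{i=2}^{k}(-1)^{n+i-1}a_{n+i}$ into $L(n-k)-L(n-1)$, whereas you pair the terms symmetrically about $n+\tfrac12$; the two bookkeeping schemes are equivalent.
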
 

\begin{proof}
Since $L(n+1)=L(n-1)+(-1)^{n-1}a_n+(-1)^n(a_n-2)=L(n-1)-2$, the assertion holds for $k=1$.
Let $2\le k\le n-1$. Then
\BD
 L(n+k)=L(n-1)-2+\sum_{i=2}^k (-1)^{n+i-1}a_{n+i}.
\ED
By assertion (a) of Lemma \ref{l1}, the sum on the right hand side equals
\BD
  \sum_{i=2}^k(-1)^{n+i-1}a_{n-i+1}=\sum_{i=2}^k(-1)^{i-1}a_{n-i+1}=\sum_{i=1}^{k-1}(-1)^ia_{n-i}.
\ED
We observe
\BD
  \sum_{i=1}^{k-1}(-1)^ia_{n-i}=\sum_{i=n-k+1}^{n-1}(-1)^ia_i.
\ED
This gives
\BD
 L(n+k)=-2+\sum_{i=1}^{n-1}(-1)^{i-1}a_i+\sum_{i=n-k+1}^{n-1}(-1)^ia_i=-2+L(n-k).
\ED
\end{proof}

\noindent
\begin{rem}
 By the construction of the sequence $C(j)$, we have $a_n=b$ for each $n=2^j, j\ge 2$.
 From Lemma \ref{l2} we obtain
 $
   L(2n)=L(2n-1)+(-1)^{2n-1}a_{2n}=L(n+(n-1))-b=L(1)-2-b=b-1-2-b=-3.
 $
\end{rem}

\begin{lemma} 
\label{l3}
Let $n=2^j$, $n\ge 8$.
For $2\le k\le n/2-1$,
\BD
 L(n+k)=-4+L(k).
\ED
In particular, $L(n+k)=L(2n+k)=L(4n+k)=\LD$
\end{lemma}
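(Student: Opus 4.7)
The plan is to decompose $L(n+k)$ at indices $n$ and $n+1$, identify the initial segment using the preceding remark, and match the remaining tail with a shifted copy of $L(k)$ via Lemma \ref{l1}(c).

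Concretely, I would write
\BD
L(n+k)=L(n)+(-1)^{n}a_{n+1}+\sum_{j=n+2}^{n+k}(-1)^{j-1}a_{j}.
\ED
The remark preceding this lemma, applied to $n/2=2^{j-1}$ in place of the remark's $n$ (valid since $j\ge 3$), gives $L(n)=-3$. The recursion defining the continued fraction of $x(b)$ yields $a_{n+1}=a_n-2=b-2$, and since $n$ is even, $(-1)^{n}a_{n+1}=b-2$. For the tail, substituting $i=j-n$ puts $i$ in the range $2\le i\le k$; since $k\le n/2-1$, Lemma \ref{l1}(c) applies and gives $a_j=a_{n+i}=a_i$. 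Again, $n$ even gives $(-1)^{j-1}=(-1)^{i-1}$, so
\BD
\sum_{j=n+2}^{n+k}(-1)^{j-1}a_{j}=\sum_{i=2}^{k}(-1)^{i-1}a_{i}=L(k)-a_{1}=L(k)-(b-1).
\ED
Adding the three contributions gives $L(n+k)=-3+(b-2)+L(k)-(b-1)=L(k)-4$.

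For the chain $L(n+k)=L(2n+k)=L(4n+k)=\LD$, I would apply the main identity with $n$ replaced by $N=2^{m}n$ for each $m\ge 1$. Each such $N$ is a power of two of size at least $16$, and its admissible range $2\le k\le N/2-1$ contains $[2,n/2-1]$; hence $L(N+k)=L(k)-4=L(n+k)$ for every $m\ge 0$.

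No step presents a serious obstacle: the only real care required is the bookkeeping around the parities of $n$ (which justifies the sign simplifications $(-1)^{n}=1$ and $(-1)^{j-1}=(-1)^{i-1}$) and the verification that the shift $i=j-n$ keeps $i$ inside the range of validity of Lemma \ref{l1}(c). An alternative route would go through Lemma \ref{l2} and Lemma \ref{l1}(b) by pairing terms $j\leftrightarrow n-j+1$ in $\sum_{j=k+1}^{n-k}(-1)^{j-1}a_j$; the pairs cancel and the unpaired middle terms contribute $-a_{n/2}+a_{n/2+1}=-2$, giving the same conclusion via $L(n+k)=-2+L(n-k)=-2+(L(k)-2)$.
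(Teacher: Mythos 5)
Your proof is correct and follows essentially the same route as the paper: split off $L(n)=-3$ (from the remark) and the term $(-1)^{n}a_{n+1}=b-2$, then convert the tail $\sum_{j=n+2}^{n+k}(-1)^{j-1}a_j$ into $\sum_{i=2}^{k}(-1)^{i-1}a_i=L(k)-(b-1)$ via Lemma \ref{l1}(c), exactly as the paper does. The alternative sketch via Lemma \ref{l2} and the $j\leftrightarrow n-j+1$ pairing is also sound, but your main argument coincides with the paper's.
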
 

\begin{proof}
We have $L(n)=-3$, by the remark. Hence $L(n+1)=L(n)+(-1)^na_{n+1}=-3+b-2=b-5$.
From Lemma \ref{l1}, (c) we obtain
\BD
  L(n+k)=b-5+(-1)^{n+1}a_{n+2}+\LD+(-1)^{n+k-1}a_{n+k}=
\ED
\BD
  b-5+(-1)^1a_2+\LD+(-1)^{k-1}a_k=b-5+L(k)-a_1=-4+L(k).
\ED
\end{proof}

Let $n=2^j$, $n\ge 8$. We define a sequence $k_i$, $i\ge 0$, in the following way:
\BE
\label{2.4}
  k_0=n-1.
\EE
If $k_{i-1}$ has been defined, $i\ge 1$, then
\BE
\label{2.6}
 k_{i}=2^{i}n-k_{i-1}.
\EE
Induction based on (\ref{2.4}) and (\ref{2.6}) gives
\BE
\label{2.7}
  2\le k_i\le 2^in-1,
\EE
and
\BE
\label{2.8}
k_i=\frac{2^{i+1}+(-1)^i}{3}\,n+(-1)^{i-1}
\EE
for all $i\ge 0$.
We have
\BD
 L(k_0)=L(n-1)=L(n)+a_n=-3+b
\ED
from the remark. Further, Lemma \ref{l2} gives, by induction,
\BD
\label{2.10}
L(k_i)=-3-2i+b.
\ED
Indeed, if $L(k_{i-1})=-3-2(i-1)+b$,
$L(k_i)=L(2^in-k_{i-1})=L(2^{i-1}n+(2^{i-1}n-k_{i-1}))=-2+L(k_{i-1})=-3-2i+b.$
Altogether, we know the numbers $k_i$ and the integer part of $S(s_{k_i}/t_{k_i})$
explicitly, namely

\begin{lemma} 
\label{l4}
Let $n=2^j$, $n\ge 8$. For $i\ge 0$ let $k_i$ be defined by {\rm(\ref{2.8})}. Then
\BD
 L(k_i)=b-3-2i.
\ED
\end{lemma}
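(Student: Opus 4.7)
The plan is to prove the identity by induction on $i$, with Lemma \ref{l2} doing essentially all the work. For the base case $i=0$, note that $k_0 = n-1$, so from the definition of $L$ together with the one-step identity $L(k+1) - L(k) = (-1)^{k-1}a_k$ one obtains $L(n-1) = L(n) - (-1)^{n-1}a_n = L(n) + a_n$, since $n$ is even. The remark following Lemma \ref{l2} supplies $L(n) = -3$ (valid because $n = 2^j$ with $j \ge 3$), and the construction of $C(j)$ gives $a_n = b$. Hence $L(k_0) = -3 + b = b-3$, matching the claimed formula.

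For the inductive step, assume $L(k_{i-1}) = b - 3 - 2(i-1)$. Using the recursion (\ref{2.6}), rewrite
\[
  k_i \;=\; 2^{i}n - k_{i-1} \;=\; 2^{i-1}n + \bigl(2^{i-1}n - k_{i-1}\bigr),
\]
so that $k_i$ takes the shape $m + k'$ with $m = 2^{i-1}n$ and $k' = m - k_{i-1}$. Since $m = 2^{j+i-1} \ge 8$ is a power of $2$, and the bounds (\ref{2.7}) at level $i-1$ guarantee $2 \le k_{i-1} \le m - 1$, we have $1 \le k' \le m - 2$, which places us precisely within the hypotheses of Lemma \ref{l2}. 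Applying that lemma yields
\[
  L(k_i) \;=\; -2 + L(m - k') \;=\; -2 + L(k_{i-1}) \;=\; -2 + \bigl(b - 3 - 2(i-1)\bigr) \;=\; b - 3 - 2i,
\]
closing the induction.

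The only point requiring genuine care is the range check $1 \le k' \le m - 1$ on the auxiliary index, which is exactly why the bounds (\ref{2.7}) were established in advance; without $k_{i-1} \le 2^{i-1}n - 1$ the value $k'$ could drop below $1$, and without $k_{i-1} \ge 2$ one could not bring the argument back into the range of Lemma \ref{l2} at the next stage. Beyond this bookkeeping the lemma presents no real obstacle: it is essentially a consolidation of Lemma \ref{l2} with the explicit form of the sequence $k_i$ derived in (\ref{2.8}).
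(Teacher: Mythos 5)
Your proof is correct and follows essentially the same route as the paper: the base case $L(k_0)=L(n-1)=L(n)+a_n=b-3$ via the remark, and the inductive step via the rewriting $k_i=2^{i-1}n+(2^{i-1}n-k_{i-1})$ so that Lemma \ref{l2} gives $L(k_i)=-2+L(k_{i-1})$. Your explicit verification of the range condition $1\le 2^{i-1}n-k_{i-1}\le 2^{i-1}n-1$ using (\ref{2.7}) is a welcome addition that the paper leaves implicit.
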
 

Lemma \ref{l4} says that the integer part $L(k_i)$ of $S(s_{k_i}/t_{k_i})$ is independent of $n$
if $n\ge 8$ is a power of $2$. Suppose, therefore, that $n_l=2^{2+l}$, $l=1,\LD,r$. Fix $i\ge 0$ for
the time being and define
\BE
\label{2.12}
k_{i,l}=\frac{2^{i+1}+(-1)^i}{3}n_l+(-1)^{i-1}.
\EE
By (\ref{2.7}),
\BD
  k_{i,l}\le 2^in_l-1\le 2^in_r-1=2^{i+r+2}-1.
\ED
Suppose that $\WH n$ is a power of $2$, $\WH n\ge 2^{i+r+3}$.
Then we have
\BD
   2\le k_{i,l}\le\frac{\WH n}{2}-1
\ED
for all $l=1,\LD,r$. Therefore, Lemma \ref{l3} and Lemma \ref{l4}
give

\begin{prop} 
\label{p1}
Let $i\ge 0$ and $r\ge 1$ be given and $n_l=2^{2+l}$, $l=1,\LD r$.  Suppose
that the numbers $k_{i,l}$ are defined as in {\rm (\ref{2.12})}. If $\WH n$ is a power
of $2$, $\WH n\ge 2^{i+r+3}$, then
\BD
  L(\WH n+k_{i,l})=-4+L(k_{i,l})=b-7-2i.
\ED
\end{prop}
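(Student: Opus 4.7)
The plan is to show that Proposition \ref{p1} follows by simply assembling Lemma \ref{l3} and Lemma \ref{l4} once one verifies that the size inequalities needed by Lemma \ref{l3} hold uniformly in $l$.

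First, I would fix $i\ge 0$ and $r\ge 1$, together with $n_l=2^{2+l}$ for $l=1,\ldots,r$, and consider the numbers
\BD
k_{i,l}=\frac{2^{i+1}+(-1)^i}{3}n_l+(-1)^{i-1}
\ED
as in (\ref{2.12}). Since each $n_l\ge 8$, Lemma \ref{l4} applies directly with $n=n_l$ and gives
\BD
L(k_{i,l})=b-3-2i
\ED
for every $l=1,\ldots,r$. This is the first of the two identities appearing in the proposition.

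Next I would deduce the identity $L(\widehat n+k_{i,l})=-4+L(k_{i,l})$ from Lemma \ref{l3}, applied with $n=\widehat n$ and $k=k_{i,l}$. For this, two inequalities must be checked. The first, $k_{i,l}\ge 2$, is immediate from (\ref{2.7}), which yields $2\le k_{i,l}\le 2^in_l-1$. The second, $k_{i,l}\le \widehat n/2-1$, follows from the chain
\BD
k_{i,l}\le 2^in_l-1\le 2^in_r-1=2^{i+r+2}-1\le \widehat n/2-1,
\ED
where the last step uses the hypothesis $\widehat n\ge 2^{i+r+3}$. Since $\widehat n\ge 2^{i+r+3}\ge 2^4\ge 8$, Lemma \ref{l3} is indeed applicable, and produces $L(\widehat n+k_{i,l})=-4+L(k_{i,l})$.

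Combining the two steps yields $L(\widehat n+k_{i,l})=-4+(b-3-2i)=b-7-2i$, which is precisely the assertion. There is no substantial obstacle here; the proof is essentially an arithmetic bookkeeping exercise, and the only point that requires any care is the verification that the bound on $k_{i,l}$ furnished by (\ref{2.7}) together with the choice $\widehat n\ge 2^{i+r+3}$ places $k_{i,l}$ into the range $[2,\widehat n/2-1]$ demanded by Lemma \ref{l3}.
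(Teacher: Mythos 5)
Your proposal is correct and follows essentially the same route as the paper: apply Lemma \ref{l4} with $n=n_l$ to get $L(k_{i,l})=b-3-2i$, use (\ref{2.7}) and the bound $\widehat n\ge 2^{i+r+3}$ to place $k_{i,l}$ in $[2,\widehat n/2-1]$, and then invoke Lemma \ref{l3} with $n=\widehat n$. The arithmetic checks ($2^{i+r+2}-1\le\widehat n/2-1$ and $n_l\ge 8$) are exactly those the paper performs.
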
 

\section{The fractional part}

Note that the numbers $k_{i,l}$ of the foregoing section are all odd. Hence
Lemma \ref{l5} and the Barkan-Hickerson-Knuth formula give
\BE
\label{3.2}
 S(s_{\WH n+k_{i,l}}/ t_{\WH n+k_{i,l}})=b-7-2i +\frac{s_{\WH n+k_{i,l}}}{ t_{\WH n+k_{i,l}}}+
 \frac{t_{\WH n+k_{i,l-1}}}{t_{\WH n+k_{i,l}}}-3.
\EE
If $\WH n$ tends to infinity $s_{\WH n+k_{i,l}}/t_{\WH n+k_{i,l}}$ tends to $x=x(b)$. Accordingly,
we have to investigate the limiting behaviour of $t_{\WH n+k_{i,l-1}}/t_{\WH n+k_{i,l}}$ in order
to understand the fractional part of formula (\ref{3.2}).

To this end we suppose
that $n$ is a power of $2$, $n\ge 8$, and $k$ is an integer, $2\le k\le n/2-1$. From (\ref{2.0})
we have $t_{n+k}=a_{n+k}t_{n+k-1}+t_{n+k-2}$, hence
\BD
   \frac{t_{n+k}}{t_{n+k-1}}= a_{n+k}+\frac{t_{n+k-2}}{t_{n+k-1}}=[a_{n+k},\frac{t_{n+k-1}}{t_{n+k-2}}].
\ED
When we repeat this procedure, we obtain the well-known fact
\BD
\label{3.4}
  \frac{t_{n+k}}{t_{n+k-1}}= [a_{n+k}, a_{n+k-1},\frac{t_{n+k-2}}{t_{n+k-3}}]=[a_{n+k},a_{n+k-1},\LD, a_1].
\ED
From Lemma \ref{l1}, (c), we infer
\BD
   a_{n+k}=a_k, a_{n+k-1}=a_{k-1}, \LD ,  a_{n+2}=a_2.
\ED
Moreover, $a_{n+1}=a_n-2=b-2$ and $a_n=b$. Finally, Lemma \ref{l1}, (b) says
\BD
  a_{n-1}=a_2,a_{n-2}=a_3,\LD,a_{n/2+2}=a_{n/2-1}.
\ED
Altogether,
\BD
  \frac{t_{n+k}}{t_{n+k-1}}= [a_{k},a_{k-1},\LD, a_2,b-2,b,a_2,a_3,\LD a_{n/2-1},a_{n/2+1},\LD,a_1].
\ED
The final terms $a_{n/2+1}, a_{n/2}, \LD,a_1$ are not of interest. It suffices to write
\BE
\label{3.6}
  \frac{t_{n+k}}{t_{n+k-1}}= [a_{k},a_{k-1},\LD, a_2,b-2,b,a_2,a_3,\LD a_{n/2-1},c(n)]
\EE
for some $c(n)\in\Q$. From Theorem 8 in \cite{Sh} we know that all numbers $a_1, a_2,\LD $ are $\ge 1$ and $\le b+2$,
hence we have
\BD
  1\le c(n)\le b+3.
\ED

\begin{prop} 
\label{p2}
Suppose that $k$ remains fixed, $2\le k\le n/2-1$, but $n=2^j$ tends to infinity.
Then $t_{n+k}/t_{n+k-1}$ converges to
\BD
  t(k)=t(k,b)=[a_k,a_{k-1},\LD,a_2,b-2,(x+1)/x],
\ED
where $x=x(b)$ is defined by {\rm (\ref{0.2})}.
\end{prop}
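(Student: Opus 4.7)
The plan is to start from equation (\ref{3.6}) and show that the tail of that finite continued fraction, namely $[b,a_2,a_3,\LD,a_{n/2-1},c(n)]$, converges to $(x+1)/x$ as $n\to\infty$, after which continuity of the (generalised) continued fraction map in its last argument delivers the result.

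First I would verify the identification $(x+1)/x=[b,a_2,a_3,\LD]$. Since $x=x(b)=[0,a_1,a_2,\LD]$ with $a_1=b-1$ (because $C(1)=[0,b-1,b+2]$), we have $1/x=[b-1,a_2,a_3,\LD]$, and therefore $(x+1)/x=1+1/x=[b,a_2,a_3,\LD]$. In particular the infinite continued fraction on the right converges as a standard convergent sequence to $(x+1)/x$.

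Second, I would handle the bounded perturbation $c(n)\in[1,b+3]$ at the end. Writing $p_m/q_m$ for the $m$th convergent of $[b,a_2,a_3,\LD]$, the standard identity
\BD
 [b,a_2,\LD,a_{m},c]=\frac{c\,p_m+p_{m-1}}{c\,q_m+q_{m-1}}
\ED
combined with $p_{m-1}q_m-p_mq_{m-1}=\pm1$ yields
\BD
 \left|[b,a_2,\LD,a_m,c]-\frac{p_m}{q_m}\right|=\frac{1}{q_m(c\,q_m+q_{m-1})}.
\ED
Since all partial quotients in $x(b)$ are at least $1$, the denominators $q_m$ grow at least like the Fibonacci numbers, so the right-hand side tends to $0$ uniformly for $c$ in the bounded range $[1,b+3]$. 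Specialising $m=n/2-2$ and $c=c(n)$ shows that $[b,a_2,\LD,a_{n/2-1},c(n)]\to(x+1)/x$ as $n=2^j\to\infty$.

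Finally, I would push this convergence through the fixed initial block $[a_k,a_{k-1},\LD,a_2,b-2,\,\cdot\,]$. Because the map $y\mapsto[a_k,a_{k-1},\LD,a_2,b-2,y]$ is a composition of finitely many fractional linear transformations of the form $y\mapsto a+1/y$ with $a\ge1$, it is continuous on $[1,\infty)$; hence $(\ref{3.6})$ gives
\BD
 \frac{t_{n+k}}{t_{n+k-1}}\longrightarrow[a_k,a_{k-1},\LD,a_2,b-2,(x+1)/x]=t(k,b).
\ED
The main obstacle, and the only non-routine step, is obtaining the uniform estimate in the bounded terminal term $c(n)$; everything else is assembling already established continued-fraction identities. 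The growth of the $q_m$ makes this uniform bound straightforward, and the boundedness $c(n)\in[1,b+3]$ was precisely recorded after (\ref{3.6}) for this purpose.
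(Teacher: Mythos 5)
Your proof is correct; it rests on the same underlying estimate as the paper's (a bounded change in the terminal entry of a long continued fraction moves its value by $O(1/q^2)$), but the two arguments are organized differently. The paper keeps the continued fraction of (\ref{3.6}) whole: it writes the limit $t(k)$ itself as $[a_k,\ldots,a_2,b-2,b,a_2,\ldots,a_{n/2-1},z(n)]$ with a bounded tail $1\le z(n)\le b+3$, so that $t(k)$ and $t_{n+k}/t_{n+k-1}$ become images of $z(n)$ and $c(n)$ under one and the same fractional linear map $w\mapsto (pw+p')/(qw+q')$; the difference then collapses algebraically to $\pm\bigl(z(n)-c(n)\bigr)/\bigl((qz(n)+q')(qc(n)+q')\bigr)$, bounded by $(2b+6)/(q+q')^2$. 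You instead split off the fixed prefix $[a_k,\ldots,a_2,b-2,\,\cdot\,]$, prove that the tail $[b,a_2,\ldots,a_{n/2-1},c(n)]$ converges to $(x+1)/x$ via the standard convergent estimate with perturbed last term, and transport the limit through the prefix by continuity of a fixed finite composition of maps $y\mapsto a+1/y$. Your route needs slightly less input --- only $c(n)\ge 1$ and the growth of the denominators, not the upper bound $b+3$ on the tails that the paper draws from Shallit's Theorem 8 --- at the price of an extra (routine) continuity step; the paper's version is a one-line computation once $z(n)$ has been introduced. Either way the proposition is fully established.
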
 

\begin{proof}
We have $x=\lim_{i\to\infty}C(i)=[0,b-1,y]$ with $y=[a_2,a_3,\LD]$. A short calculation shows
\BD
  [b, y]=[b,a_2,a_3,\LD]=(x+1)/x.
\ED
Let $p_i/q_i$, $i=0,1,2,\LD$ be the convergents of $t_{n+k}/t_{n+k-1}$ (where the numbers $p_i$, $q_i$
are defined in the same way as the numbers $s_i$, $t_i$ in (\ref{2.0})).
We have, by (\ref{3.6}),
\BD
  \frac{t_{n+k}}{t_{n+k-1}}=\frac{pc(n)+p'}{qc(n)+q'}
\ED
with $p=p_{k+n/2-1}$, $p'=p_{k+n/2-2}$, $q=q_{k+n/2-1}$, $q'=q_{k+n/2-2}$.
We write
\BD
  t(k)=[a_k,\LD,a_2,b-2,b,a_2,\LD,a_{n/2-1}, z(n)],
\ED
where $z(n)$ satisfies $1\le z(n)\le b+3$ by the argument above. Accordingly,
\BD
 t(k)=\frac{pz(n)+p'}{qz(n)+q'}.
\ED
This gives
\BE
\label{3.8}
   t(k)-\frac{t_{n+k}}{t_{n+k-1}}=\frac{pz(n)+p'}{qz(n)+q'}-\frac{pc(n)+p'}{qc(n)+q'}.
\EE
The expression on the right hand side of (\ref{3.8}) simplifies to
\BD
   \frac{(pq'-p'q)z(n)+(p'q-pq')c(n)}{(qz(n)+q')(qc(n)+q')}.
\ED
However, it is well-known that $pq'-p'q=\pm 1$. Observing $1\le z(n), c(n)\le b+3$,
we obtain
\BD
  \left|t(k)-\frac{t_{n+k}}{t_{n+k-1}}\right|\le \frac{2b+6}{(q+q')^2}.
\ED
Since $q$ and $q'$ tend to infinity for $n\to\infty$, our proof is complete.
\end{proof}

We conclude this section with two observations.

\begin{lemma} 
\label{l5}
In the above setting, let $2\le k<k'$ be integers. Then $t(k)\ne t(k')$.
\end{lemma}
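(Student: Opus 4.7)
The plan is to reduce the claim to the uniqueness of the infinite simple continued fraction expansion of an irrational number, and then to the transcendence of $x(b)$.

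First I would spell out $t(k)$ as an honest infinite simple continued fraction. Since $x=[0,b-1,a_2,a_3,\LD]$, a short calculation (essentially the one already used in the proof of Proposition~\ref{p2}) gives $(x+1)/x=[b,a_2,a_3,\LD]$, so
\BD
  t(k)=[a_k,a_{k-1},\LD,a_3,a_2,b-2,b,a_2,a_3,a_4,\LD].
\ED
All partial quotients are positive integers: $a_j\ge 1$ by \cite[Thm.\ 8]{Sh}, and $b-2\ge 1$ since $b\ge 3$. Moreover $t(k)$ is irrational — in fact transcendental — because it is a M\"obius transformation with integer coefficients (and determinant $\pm 1$) of the transcendental number $(x+1)/x$. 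Hence its simple continued fraction expansion is unique.

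Now suppose, for contradiction, that $t(k)=t(k')$ for some $2\le k<k'$, and set $m=k'-k\ge 1$. By uniqueness, the two displayed expansions must agree position by position. At index $i\ge k'+1$ the partial quotient of $t(k)$ is $a_{i-k+1}$ (coming from the post-tail $a_2,a_3,\LD$), and that of $t(k')$ is $a_{i-k'+1}$. Equating these for every $i\ge k'+1$ yields $a_j=a_{j-m}$ for all $j\ge m+2$, so the continued fraction of $x(b)$ is eventually periodic with period $m$.

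The last step is then immediate and will not be the hard part: by Lagrange's theorem an eventually periodic simple continued fraction represents a quadratic irrational, which contradicts the transcendence of $x(b)$. The real obstacle is the bookkeeping in the previous paragraph — writing $t(k)$ as a legitimate simple continued fraction (verifying all partial quotients are positive integers and that $t(k)$ is irrational, so that CF-uniqueness applies) and then lining up the index shift between the tails of $t(k)$ and $t(k')$ correctly. Once this is done, the transcendence of $x(b)$ finishes the argument.
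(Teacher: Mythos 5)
Your proof is correct, but it takes a slightly different route from the paper's. The paper argues in two lines: since $t(k')=[a_{k'},\ldots,a_{k+1},t(k)]$, the hypothesis $t(k)=t(k')$ makes $t(k)$ a fixed point of a nondegenerate M\"obius transformation with integer coefficients, so $t(k)$ satisfies a quadratic equation over $\Q$; this contradicts the transcendence of $t(k)$ (inherited from $x$). You instead unfold both numbers into honest infinite simple continued fractions, invoke uniqueness of the expansion to match the tails position by position, conclude that the partial quotients of $x(b)$ are eventually periodic with period $k'-k$, and then apply Lagrange's theorem to contradict the transcendence of $x$ itself. Your index bookkeeping is right (at position $i\ge k'+1$ the entries are $a_{i-k+1}$ and $a_{i-k'+1}$, giving $a_j=a_{j-m}$ for $j\ge m+2$), and the hypotheses for CF-uniqueness are verified ($b-2\ge 1$ and all $a_j\ge 1$, $t(k)$ irrational). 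The trade-off: the paper's fixed-point argument is shorter and needs no appeal to uniqueness of expansions or to Lagrange, but it does require first transferring transcendence from $x$ to $t(k)$; your version keeps the contradiction at the level of $x$ and only uses that $x$ is not a quadratic irrational, at the cost of the tail-matching bookkeeping.
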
 

\begin{proof}
Suppose $t(k)=t(k')$, so
\BD
  [a_{k'},\LD a_{k+1},t(k)]=t(k).
\ED
An identity of this kind can only hold if $t(k)$ is a quadratic irrationality.
However, $t(k)$ is a transcendental number since $x$ is transcendental (see \cite[p. 35, Satz 8]{Sch}).
\end{proof}

\begin{lemma} 
\label{l6}
Let $k\ge 2$ be an integer. Then $x+1/t(k)$ is a transcendental number.
\end{lemma}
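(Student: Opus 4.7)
\medskip
\noindent\textbf{Proof plan.} My plan is to realise $x + 1/t(k)$ as the value at the transcendental number $x$ of a nonconstant rational function with rational coefficients, and then appeal to the standard fact that such a value is necessarily transcendental.

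To this end I would set $u = (x+1)/x$ and use the definition $t(k) = [a_k, a_{k-1}, \LD, a_2, b-2, u]$. The standard M\"obius representation of a finite continued fraction then gives
\BD
 t(k) = \frac{Au + B}{Cu + D},
\ED
with $A = P_{k-1}$, $B = P_{k-2}$, $C = Q_{k-1}$, $D = Q_{k-2}$ the numerators and denominators of the last two convergents of the finite continued fraction $[a_k, a_{k-1}, \LD, a_2, b-2]$; these are positive integers and satisfy $AD - BC = (-1)^{k-1}$.

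Next I would substitute $u = 1 + 1/x$, clear the resulting $1/x$ by multiplying numerator and denominator by $x$, and add $x$ to obtain the explicit expression
\BD
 x + \frac{1}{t(k)} = \frac{(A+B)x^2 + (A+C+D)x + C}{(A+B)x + A} =: R(x),
\ED
with $R \in \Q(X)$. Because $b \ge 3$ forces every partial quotient above to be $\ge 1$, all $P_i$ and $Q_i$ are strictly positive, so in particular $A+B \ge 2$. Hence the numerator of $R$ has degree $2$ while the denominator has degree $1$, and $R$ is nonconstant.

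Finally, if $R(x)$ were algebraic and equal to some $\alpha$, clearing denominators in the equation $R(x) = \alpha$ would yield a quadratic equation in $x$ with coefficients in $\Q(\alpha)$ and leading coefficient $A + B \ne 0$; this would make $x$ algebraic over $\Q(\alpha)$, hence over $\Q$, contradicting the transcendence of $x$ (cf.\ the reference to \cite{Sch} used in Lemma \ref{l5}). I do not expect any real obstacle here: the only point worth double-checking is that numerator and denominator of $R$ cannot collapse to a common factor leaving a constant, but the differing degrees $2$ and $1$ — guaranteed by $A+B > 0$ — rule this out immediately.
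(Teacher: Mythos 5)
Your proof is correct and follows essentially the same route as the paper: both express $1/t(k)$ as a M\"obius transform of $(x+1)/x$, hence as a ratio of linear polynomials in $x$ with integer coefficients, so that $x+1/t(k)=\alpha$ becomes a quadratic equation for $x$ over $\Q(\alpha)$, contradicting the transcendence of $x$. Your version is slightly more explicit in checking that the leading coefficient $A+B$ is nonzero (so the quadratic equation is genuinely nontrivial), a point the paper leaves implicit.
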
 

\begin{proof}
Suppose $\alpha=x+t(k)$ is algebraic. Since we may write
\BD
   1/t(k)=[0,t(k)]=\frac{p(x+1)/x+p'}{q(x+1)/x+q'}=\frac{p(x+1)+p'x}{q(x+1)+q'x}
\ED
with integers $p,p',q,q'$, $q>0$, $q'\ge 0$,
we obtain
\BD
  x+\frac{p(x+1)+p'x}{q(x+1)+q'x}=\alpha.
\ED
This, however, means that $x$ satisfies a quadratic equation over the field $\Q(\alpha)$.
Accordingly, $x$ is algebraic, a contradiction.
\end{proof}

\section{Proof of Theorem \ref{t1}}

As in the setting of Proposition \ref{p1}, let $i\ge 0$ and $r\ge 1$ be given and $n_l=2^{2+l}$, $l=1,\LD, r$.
Suppose that the numbers $k_{i,l}$ are defined as in {\rm (\ref{2.12})}. Let $\WH n$ be a power
of $2$, $\WH n\ge 2^{i+r+3}$. By Proposition \ref{p1},
\BD
  L(\WH n+k_{i,l})=b-7-2i.
\ED
If $\WH n$ tends to infinity, Proposition \ref{p2} says that $t_{\WH n+k_{i,l}}/t_{\WH n+k_{i,l}-1}$
tends to
\BD
  t(k_{i,l})=[a_{k_{i,l}}, a_{k_{i,l}-1},\LD,a_2,b-2,(x+1)/x].
\ED
Therefore $t_{\WH n+k_{i,l}-1}/t_{\WH n+k_{i,l}}$ tends to $1/t(k_{i,l})$. Altogether, we have
\BD
  S(s_{\WH n+k_{i,l}}/t_{\WH n+k_{i,l}})\to b-10-2i+x+\frac{1}{t(k_{i,l})}.
\ED
For $l<l'\le r$ we obtain $k_{i,l}<k_{i,l'}$ from (\ref{2.12}).
By Lemma \ref{l5}, $t(k_{i,l})\ne t(k_{i,l'})$. Accordingly, the numbers $1/t(k_{i,l})$ are pairwise
different for $1\le l\le r$. Further, $x+1/t(k_{i,l})$ is transcendental, by Lemma \ref{l6}.
The inequalities
\BD
 1/b<x<1/(b-1) \MB{ and } 0<1/t(k_{i,l})<1
\ED
are obvious by (\ref{0.2}) and $x=[0,b-1,\LD]$, $1/t(k_{i,l})=[0, a_{k_{i,l}},\LD]$.
Therefore, the sequence $S(s_j/t_j)$, $j\ge 1$, has $r$ distinct transcendental cluster points in the interval
\BD
  \left(b-10-2i+\frac 1b,b-9-2i+\frac1{b-1}\right).
\ED
Since $r$ can be chosen arbitrarily large, this proves Theorem \ref{t1}.


\end{document}